\newtheorem{thm}{Theorem}
\newtheorem{lem}{Lemma}
\newcommand{\naturais}{\mathbb{N}}
\newcommand{\Z}{\mathbb{Z}}
\newcommand{\R}{\mathbb{R}}
\newcommand{\A}{{\mathcal{A}}}
\newcommand{\B}{{\mathcal{B}}}
\newcommand{\I}{{\mathcal{I}}}
\newcommand{\PP}{\mathbb{P}}
\newcommand{\QQ}{{\mathbb{Q}}}
\newcommand{\V}{{\mathcal{V}}}
\newcommand{\W}{{\mathcal{W}}}
\newcommand{\EE}{{\mathbb{E}}}
\newcommand{\capa}{\mathop{\mathrm{cap}}}
\newcommand{\ome}{\boldsymbol{\omega}}
\newcommand{\ind}{{\bf 1}}
\begin{document}

\begin{center}
\LARGE Russo's formula for random interlacements
\end{center}

\begin{center}
\textit{Diego F. de Bernardini
} 
\hspace{1.7cm}  \textit{Serguei Popov \footnote{Department of Statistics, Institute of Mathematics, Statistics and Scientific Computation, University of Campinas, Brazil, 
e-mails: $\{$bernardini, popov$\}$@ime.unicamp.br}}
\end{center}

\vspace{0.1cm}

\begin{abstract}
\footnotesize 
In this paper we obtain a couple of explicit expressions for the derivative of the probability of an increasing event in the random interlacements model. The event is supported in a finite subset of the lattice, and the derivative is with respect to the intensity parameter of the model.

\vspace{0.3cm}
\noindent\textit{\textbf{Keywords}}: Random interlacements, percolation, Russo's formula, increasing events, plus-pivotal trajectories.

\noindent\textit{\textbf{Mathematics Subject Classification (2010)}}: Primary 60K35; Secondary 60G50, 82C41.

\end{abstract}

\begin{section}{Introduction}\label{intro}

The random interlacements process was introduced by Alain-Sol Sznitman~\cite{Szn10}, who was o\-ri\-gi\-nal\-ly motivated by a particular question about random walks and corrosion of materials. Basically, the motivation was the investigation of the trace of a simple random walk trajectory on large graphs. 

Since its introduction, this model has been extensively studied, see for example~\cite{Szn11P,Szn11A,Tei09,Tei11}. Besides Sznitman's original article~\cite{Szn10}, other good references on this subject are, for example, \cite{CT12,DRS14}.

Roughly, the random interlacements model is characterized by a Poisson point process on a space of doubly infinite simple random walk trajectories in~$\Z^d$, with~$d\geq 3$, so that a realization of the process is basically a random ``Poissonian'' soup of such trajectories. The model also has a positive intensity parameter~$u$ which controls this soup, in such a way that more trajectories are included in the process as the value of the parameter increases. 

The main difficulty when studying this process stems from the fact that e.g.\ the set of sites covered by the trajectories is a dependent field.
The dependence and decoupling properties of the process were extensively investigated in recent works, see for example~\cite{PT,Szn10}.

Specifically, the goal of this article is to establish expressions for the derivative, with respect to the intensity parameter, of the probability of an increasing event in the random interlacements model, when this event is supported in a finite subset of~$\Z^d$. We will call these expressions Russo's formula for random interlacements, in analogy to the corresponding formula that comes from percolation theory and which establishes an expression for the derivative of the probability of increasing events in the usual percolation model (see e.g.\ Theorem~2.25 in Section~2.4 of~\cite{Gri99}).

This paper is organized in the following way: 
In Section~\ref{definitions}, we recall the definition of the random interlacements model, as well as some of its characteristics, and we also discuss other related definitions, as the increasing events and the plus-pivotal trajectories. Then, we present our result in Section~\ref{results}, and discuss some possible applications in Section~\ref{s_applications}. Finally, the proofs are placed in Section~\ref{proofs}.

\end{section}

\begin{section}{Definitions: The random interlacements process}\label{definitions}

In this section, we recall the definition of the random interlacements process, introduced by Alain-Sol Sznitman in~\cite{Szn10}.

We need to consider first the spaces of infinite and doubly infinite trajectories in~$\Z^d$, $d\geq 3$, respectively defined as
\begin{align*}
W_+&= \Big\{ w:\naturais\rightarrow\Z^d : \|w(n+1)-w(n)\|=1, \forall n\in\naturais, \\  
& \hspace{6cm}\mbox{ and } \#\{n:w(n)=y\}<\infty, \forall y\in\Z^d\Big\}, \\
\mbox{and } \ \  W&= \Big\{ w:\Z\rightarrow\Z^d : \|w(n+1)-w(n)\|=1, \forall n\in\Z, \\  
& \hspace{6cm}\mbox{ and } \#\{n:w(n)=y\}<\infty, \forall y\in\Z^d\Big\},
\end{align*}
where $\|\cdot\|$ denotes the Euclidean norm. These spaces are respectively endowed with the~$\sigma$-al\-ge\-bras $\W_+$ and~$\W$, generated by their canonical coordinates~$\{X_n\}_{n\in\naturais}$ and~$\{X_n\}_{n\in\Z}$.

Additionally, define the space of trajectories modulo time-shift,
\begin{align*}
 W^* = W/\sim, \ \ \mbox{where} \ \ w\sim w' \Leftrightarrow w(\cdot)= w'(\cdot + k), \ \ \mbox{for some} \ \  k\in\Z, 
\end{align*}
endowed with the~$\sigma$-algebra~$\W^* = \{B\subset W^*:(\pi^*)^{-1}(B)\in\W\}$, where~$\pi^*$ is the canonical projection from the space~$W$ to~$W^*$.

For a finite set $G\subset \Z^d$, 
define its internal boundary~$\partial G := \{x\in G : \|x-y\|=1 \mbox{ for some } y\notin G\}$, and the stopping time~$\tilde{H}_G(w):=\min\{n\geq 1 : X_n(w)\in G\}$ for~$w\in W_+$, assuming that~$\min\{\emptyset\}=\infty$. Also, denote by~$W_G$ the set of trajectories in~$W$ which necessarily visit the set~$G$, $W_G := \{w\in W : X_n(w)\in G, \mbox{ for some }  n\in\Z\}$, so that, by the definition of~$\pi^*$, the set of modulo time-shift trajectories that visit~$G$ is given by~$ W_G^* := \pi^*(W_G)$.

It is then possible to define the harmonic measure in~$G$, $e_G(x):= P_x[\tilde{H}_G=\infty]\ind_G(x)$, for~$x\in\Z^d$, where~$\ind_G$ denotes the indicator function in~$G$ and~$P_x[~\cdot~]$ is the law of the simple random walk starting at~$x$. Thus, the capacity of the set~$G$ is defined by~$\capa(G):=\sum_{x\in\Z^d}e_{G}(x)$, and the normalized harmonic measure by $\bar{e}_G(x) := e_G(x)/\capa(G)$, for~$x\in \Z^d$.

Finally, the random interlacements process is governed by a Poisson point process on the measurable space~$(W^*\times\R_+, \mathcal{W}^*\otimes\B\mbox{(}\R_+\mbox{)})$, with a specific intensity measure. To describe this intensity measure, consider the measure denoted by~$Q_G$, which is defined on~$(W,\W)$, such that
\begin{align*}
Q_G\Big((X_{-n})_{n\geq 0}\in B_1, X_0=x, (X_{n})_{n\geq 0}\in B_2\Big) = P_x(B_1\mid \tilde{H}_G=\infty)e_G(x)P_x(B_2),
\end{align*}
for any~$B_1,B_2\in\W_+$ and~$x\in\Z^d$. The above mentioned intensity measure is just the product measure~$\nu\otimes\lambda_+$, where~$\lambda_+$ denotes the Lebesgue measure on~$\R_+$ and~$\nu$ is the only~$\sigma$-finite measure in~$(W^*,\W^*)$ such that $\ind_{W_G^*}\cdot\nu = \pi^*\circ Q_G$, for any finite set~$G\subset\Z^d$, where~$\ind_{W_G^*}\cdot\nu(\cdot) := \nu(W_G^*\cap\cdot)$, see Theorem~1.1 of~\cite{Szn10} where the existence and the uniqueness of~$\nu$ are established.

Let us consider also the space of locally finite point measures on~$W^*\times\R_+$,
\begin{align*}
\Omega &= \Big\{ \ome=\sum_{i\geq1}\delta_{(w_i^*,u_i)} : w_i^*\in W^*, u_i\in\R_+, \mbox{ such that }   \\  
&\hspace{3.5cm}\ome(W_G^*\times[0,u])<\infty, \mbox{ for every finite } G\subset\Z^d \mbox{ and } u\geq 0 \Big\},
\end{align*}
endowed with the~$\sigma$-algebra~$\A$ generated by the mappings~$\ome\mapsto\ome(D)$, for~$D\in\W^*\otimes\B(\R_+)$, and let us denote by~$\PP$ the law of the Poisson point process on ($\Omega$, $\A$) with intensity measure~$\nu\otimes\lambda_+$, that characterizes the random interlacements process.

Now, for~$u\geq 0$, we denote by~$\PP^u$ the law of the Poisson point process which governs the random interlacements process at level~$u$, restricted to the set~$G$. Observe that we are omitting the dependence on the set~$G$ in this notation. 

In words, in the interlacements process restricted to~$G$ at level~$u$, a Poisson-distributed random variable with parameter~$u\capa(G)$ determines the number of independent simple random walks which are started at the boundary~$\partial G$, where each one of the starting sites is randomly chosen according to the measure~$\bar{e}_G(x)$, for~$x\in\partial G$. Then, the walks are let run up to infinity.

Precisely, $\PP^u$ is the law of a Poisson point process on~$W_+$ with intensity measure equal to~$uP_{e_G}$, where, for~$B\in\W_+$, $P_{e_G}(B):=\sum_{x\in \Z^d}e_G(x)P_x(B)$. In the general case (that is, at any level) the process restricted to~$G$ is described by a Poisson point process on~$W_+\times\R_+$ with intensity measure~$P_{e_G}\otimes\lambda_+$.

To conclude the discussion, analogously to the definition of~$\Omega$ consider now the space of locally finite point measures on~$W_+\times\R_+$,
\begin{align*}
\Omega_+ = \Big\{ \ome_+=\sum_{i\geq1}\delta_{(w_i,u_i)} : w_i\in W_+, u_i\in\R_+, \mbox{ such that }  \ome_+(W_+\times[0,u])<\infty, \mbox{ for all } u\geq 0 \Big\},
\end{align*}
endowed with the~$\sigma$-algebra~$\A_+$ generated by the mappings~$\ome_+\mapsto\ome_+(D)$, for~$D\in\W_+\otimes\B(\R_+)$.

Then, consider the space of one-sided trajectories in~$\Z^d$ which necessarily start at~$\partial G$, $W_{G,+} := \{w\in W_+ : X_0(w)=w(0)\in\partial G\}$, and with that, define also the spaces~$\Omega^{G,+}:= \{\ome\in~\Omega_+ : \ome=\sum_{i\geq 1}\delta_{(w_i,u_i)}, w_i\in W_{G,+}\}$ and~$\Omega^{G,+}_u:=\{\ome\in\Omega^{G,+} : \ome=\sum_{i\geq 1}\delta_{(w_i,u_i)}, u_i\leq u \}$. We denote by~$\ome_u$ an element of~$\Omega^{G,+}_u$, and we interpret~$\ome_u$ as the random realization (under~$\PP^u$) of the interlacements process at level~$u$, restricted to~$G$, that is, the random configuration of infinite trajectories with indexes smaller than or equal to~$u$, which start at~$\partial G$.

It is worth to mention a partial order relation between the elements of~$\Omega^{G,+}$. Precisely, for configurations~$\ome$ and~$\ome'$ in~$\Omega^{G,+}$, we write~$\ome\leq\ome'$ whenever all trajectories composing~$\ome$ are also present in~$\ome'$. Thus, if the trajectory~$w_i\in W_{G,+}$ (with index~$u_i$) is present in the realization of the process (restricted to~$G$) at level~$u$, then it will also be present in the realization of the process at all levels~$u'\geq u$, and we write~$\ome_u\leq\ome_{u'}$ whenever~$u\leq u'$. 

Lastly, for~$\ome_u=\sum_{i\geq 1}\delta_{(w_i,u_i)}\in\Omega_u^{G,+}$, we recall the definitions of the interlacement and the vacant sets, restricted to~$G$ at level~$u$, respectively given by 
\begin{align} 
\I^u_G = \I^u_G(\ome_u) = \Big(\bigcup_{i\geq 1} R(w_i)\Big)\cap G \ \ \mbox{ and } \ \ \V^u_G= \V^u_G(\ome_u) = G\setminus \I^u_G(\ome_u), 
\label{eq21}
\end{align}
where~$R(w_i)$ is the range of~$w_i$.

\begin{subsection}{Increasing events}\label{inc_ev}

We now discuss the notion of increasing events in the random interlacements model. 
Formally, an event~$A$ is said to be increasing with respect to the random interlacements process restricted to~$G$ if, for~$\ome,\ome'\in\Omega^{G,+}$, one has~$\ind_A(\ome)\leq\ind_A(\ome')$ whenever~$\ome\leq\ome'$, that is, 
if the event~$A$ occurs under configuration~$\ome$, then~$A$ also has to occur under configuration~$\ome'$, whenever~$\ome\leq\ome'$. On the other hand, as in the case of Bernoulli (site) percolation, one can also talk about an event to be increasing with respect to the sites of~$\Z^d$ (or the sites of~$G$) in the usual way, that is, if the event occurs under a certain configuration of ``visited'' (or ``open'') sites, then it will also occur if more sites are visited. The first notion (which refers to the trajectories of the interlacement) is more general, in the sense that, if an increasing event is described in terms of vacant/visited sites of the lattice, then the corresponding event defined through the trajectories will also be increasing. Thus, we will always refer to the first notion, that is, we consider increasing events with respect to the (trajectories of the) random interlacements.

Moreover, we say that the event is supported on the set~$G$ when it is defined only in terms of the sites in~$\Z^d$ which belong to~$G$. More precisely, consider the~$\sigma$-algebra~$\W_{G,+}$ in~$W_{G,+}$ generated by its canonical coordinates, and the~$\sigma$-algebra~$\A^{G,+}$ of subsets of~$\Omega^{G,+}$, generated by the mappings~$\ome\mapsto\ome(D)$, for $D\in\W_{G,+}\otimes\B(\R_+)$. Thus, to say that the event is supported on the set~$G$ means that this event belongs to the $\sigma$-algebra~$\A^{G,+}$ of subsets of~$\Omega^{G,+}$, and so represents a collection of configurations~$\ome$ of trajectories that start on the boundary of~$G$, where these configurations~$\ome$ on the mentioned collection satisfy some condition imposed only in terms of the sites of the set~$G$.

An example can be constructed in the following way. Denote by~$\Psi$ the class of finite paths of neighbor sites in~$\Z^d$, precisely defined by
\begin{align*} 
\Psi = \Big\{ \tau:\{0,1,\dots,k\}\rightarrow\Z^d : k\geq 1 \ \ \mbox{and} \ \ \|\tau(n+1)-\tau(n)\|=1, \ \ \forall n=0,1,\dots,k-1\Big\}. 
\end{align*}
Fix two distinct sites~$v$ and~$z$ in~$G$.
Then, the following event will be increasing
\begin{center}
\textit{``there exist a finite path (in~$\Psi$), completely contained in the interlacement set restricted to~$G$ at level~$u$, connecting~$v$ and~$z$'',   }
\end{center}
which can formally be represented as
\begin{align*} 
\Big\{\ome\in \Omega^{G,+}: \exists \tau\in\Psi \ \ \mbox{with}\ \  R(\tau)\subset\I^u_G, \ \ \tau(0)=v \ \ \mbox{ and }\ \  \tau(k)=z\Big\}, 
\end{align*}
for~$v,z\in G$, where~$R(\tau)$ represents, as before, the range of the path~$\tau$, see Figure~\ref{figure1} for an illustration.
\begin{figure}[ht]
\centering
\includegraphics[scale=0.5]{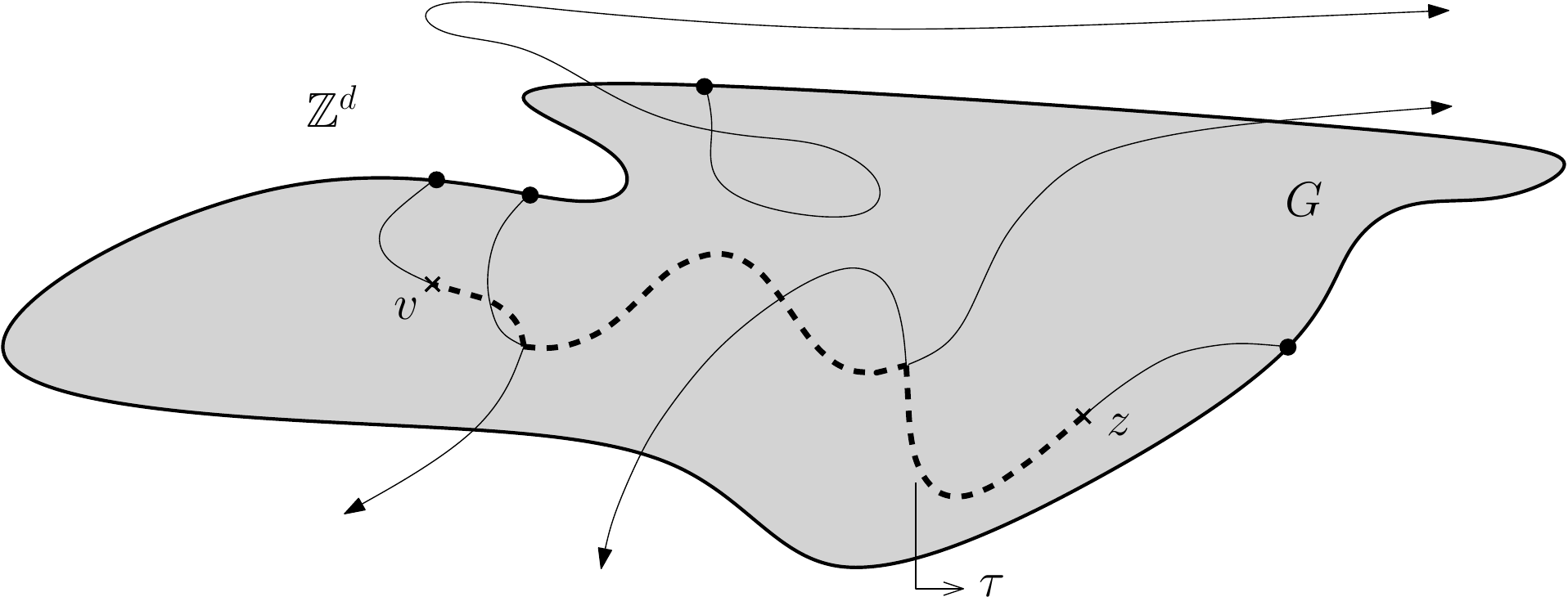}
\caption{\footnotesize In this configuration, the sites~$v$ and~$z$ in~$G$ are connected by a finite path~$\tau$ (dashed line), completely contained in~$\I_G^u$.}
\label{figure1}
\end{figure}

\end{subsection}

\begin{subsection}{Plus-pivotal trajectories}\label{ppt}

For an increasing event~$A$, we say that the trajectory~$w^*\in W_{G,+}$ (with respective index~$v\leq u$) is plus-pivotal for the event~$A$ on the configuration~$\ome_u\in\Omega^{G,+}_u$ if the event~$A$ occurs and~$w^*$ is present in the configuration~$\ome_u$, but~$A$ would no longer occur if~$w^*$ was removed from~$\ome_u$, that is,~$\ind_A(\ome_u)=1$ and~$\ind_A(\ome_u-\delta_{(w^*,v)})=0$. For example, three of the four trajectories on Figure~\ref{figure1} are plus-pivotal for the event which was mentioned in that example. On that figure, except for the upper trajectory, if any of the other three trajectories is removed, we will not have a path contained in~$\I_G^u$ connecting the sites~$v$ and~$z$ anymore.

We denote by~$N^{+}_{\ome_u}$ the number of plus-pivotal trajectories for the increasing event~$A$, on configuration~$\ome_u$. Observe that, since the plus-pivotal trajectories only exist when~$A$ occurs (under~$\ome_u$), we can then write $ N^{+}_{\ome_u} = N^{+}_{\ome_u}\ind_A(\ome_u)$. Note that we are omitting the dependence on the event~$A$ in this notation. 

Recall that, in the Bernoulli (bond) percolation model in~$\Z^d$ ($d\geq 1$), a bond (or edge) of~$\Z^d$ is said to be pivotal for an increasing event if and only if the event occurs when that bond is ``open'' and it does not occur when the same bond is ``closed'', keeping unchanged the states of all other bonds.  

\end{subsection}
\end{section}

\begin{section}{Result}\label{results}

We present now the result we have obtained, concerning the probability of an increasing event supported on a finite set of~$\Z^d$, in the random interlacements model restricted to that set, and in the next section we develop its proof.

In order to study the derivative of the above mentioned probability, it is possible to show first that this probability is analytic (and hence differentiable) as a function of the intensity parameter~$u$. More precisely, it is possible to show that, if~$G$ is a finite subset of~$\Z^d$, with~$d\geq 3$, and~$A$ is an event supported on~$G$ and increasing with respect to the interlacement~$\I^u_G$, then the probability of~$A$ under the law~$\PP^u$, denoted by~$\PP^u(A)$, is an analytic function of~$u$.

Since the formal proof for this statement is elementary, we give only a brief idea on how to do it. Basically, by using the total probability formula, conditioning on the number of trajectories in the interlacement~$\I^u_G$ (which is a Poisson random variable), one can write the probability~$\PP^u(A)$ as the product of an exponential function, which is analytic, by a power series on~$u$, which we can also show that is analytic as a function of~$u$.

Next, before stating our main result, we need to introduce some notations:

\begin{itemize}
\item $\EE^u$ represents the expectation under the law~$\PP^u$;
\item $\QQ$ denotes the law of a trajectory~$w_{\eta}$ in~$W_{G,+}$, with the starting point chosen according to the normalized harmonic measure~$\bar{e}_G$;
\item $\widehat{\EE}^u$ represents the expectation under the product measure~$\PP^u\otimes \QQ$;
\item and finally, $M$ denotes the number of trajectories in~$\I^u_G$, observing that it has a Poisson distribution with parameter~$u\capa(G)$.
\end{itemize}

Also, recall that~$N^+_{\ome_u}$ is the number of plus-pivotal trajectories for the event~$A$ on the configuration~$\ome_u$ in~$\Omega_u^{G,+}$, as defined in Section~\ref{ppt}.

\begin{thm}[Russo's formula for random interlacements]
 \label{RFRI_Thm}
Let~$G$ be a finite subset of~$\Z^d$, with~$d\geq3$, and also let~$A$ be an increasing event in~$G$. Then, for $u>0$
\begin{align}
 \frac{d}{du}\PP^u(A) &= \capa(G)\widehat{\EE}^u\Big[\ind_{\{\ome_{u} +\delta_{(w_\eta)}\in A\}}\ind_{\{\ome_u\notin A\}}\Big] \label{RFRI_1}  \\
&= \frac{1}{u}\EE^u\Big[N^{+}_{\ome_u}\Big] \label{RFRI_2} \\
&= \frac{1}{u}\PP^u(A)\Big( \EE^u\Big[ M\mid \{\ome_u\in A\} \Big] - u\capa(G) \Big). \label{RFRI_3}
\end{align}
Moreover, \eqref{RFRI_1} also gives the expression for the right derivative at $u=0$.
\end{thm}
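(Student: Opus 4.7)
The plan is to exploit two complementary descriptions of the process: the explicit series obtained by conditioning on $M$, which is best suited for~\eqref{RFRI_3}; the splitting of the underlying Poisson process over disjoint label-ranges, which yields~\eqref{RFRI_1} together with the right derivative at $u=0$; and finally the Mecke identity, which provides the bridge between~\eqref{RFRI_1} and~\eqref{RFRI_2}. Since analyticity of $u\mapsto\PP^u(A)$ has already been argued informally in the paragraph preceding the theorem, existence of the derivative is not an issue and I only need to compute it.

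For~\eqref{RFRI_3} I would condition on $M$ and exploit that, given $M=m$, the trajectories are i.i.d.\ with law $\QQ$ while their labels (uniformly on $[0,u]$) are irrelevant for an event $A\in\A^{G,+}$ that does not distinguish labels. Setting $q_m:=\QQ^{\otimes m}(A)$ (which does not depend on $u$),
\[
\PP^u(A) = \sum_{m=0}^\infty e^{-u\capa(G)}\,\frac{(u\capa(G))^m}{m!}\, q_m.
\]
Uniform convergence on compact $u$-intervals justifies termwise differentiation, and using $\frac{d}{du}\bigl[e^{-u\capa(G)}(u\capa(G))^m/m!\bigr]=\bigl(\tfrac{m}{u}-\capa(G)\bigr)\,e^{-u\capa(G)}(u\capa(G))^m/m!$ for $u>0$ yields $\frac{d}{du}\PP^u(A)=\tfrac{1}{u}\EE^u[M\,\ind_A(\ome_u)]-\capa(G)\PP^u(A)$, which, after factoring $\PP^u(A)$, is~\eqref{RFRI_3}.

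For~\eqref{RFRI_1} I would use a direct increment argument. For $h>0$, decompose the Poisson process behind $\PP^{u+h}$ into its independent restrictions to labels in $[0,u]$ (distributed as $\ome_u$) and to labels in $(u,u+h]$ (whose number of points is Poisson of mean $h\capa(G)$ and whose points have law $\QQ$ in the $W_{G,+}$-coordinate). Since the probability of at least two arrivals in $(u,u+h]$ is $O(h^2)$, a standard Poisson estimate gives
\[
\PP^{u+h}(A)-\PP^u(A) = h\capa(G)\,\widehat{\EE}^u\bigl[\ind_A(\ome_u+\delta_{(w_\eta)})-\ind_A(\ome_u)\bigr] + O(h^2).
\]
The monotonicity of $A$ collapses the bracketed expression into $\ind_{\{\ome_u+\delta_{(w_\eta)}\in A\}}\ind_{\{\ome_u\notin A\}}$; dividing by $h$ and letting $h\downarrow 0$ delivers~\eqref{RFRI_1}. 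The very same one-sided computation at $u=0$ (with $\ome_0\equiv 0$) yields the right derivative there.

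For~\eqref{RFRI_2} I would pass from~\eqref{RFRI_1} via the Mecke (Slivnyak) identity. Writing the pivotal count as
\[
N^+_{\ome_u} = \sum_{(w_i,v_i)\in\ome_u}\ind_A(\ome_u)\,\ind_{A^c}(\ome_u-\delta_{(w_i,v_i)}),
\]
and applying Mecke to the Poisson process $\ome_u$ on $W_{G,+}\times[0,u]$ with intensity $P_{e_G}\otimes\lambda_+|_{[0,u]}$ produces
\[
\EE^u[N^+_{\ome_u}] = \int_{W_{G,+}\times[0,u]}\EE^u\bigl[\ind_A(\ome_u+\delta_{(w,v)})\,\ind_{A^c}(\ome_u)\bigr]\,dP_{e_G}(w)\,dv,
\]
and the integrand being $v$-independent, combined with $P_{e_G}=\capa(G)\,\bar{e}_G$, collapses the right-hand side to $u\capa(G)\,\widehat{\EE}^u[\ind_{\{\ome_u+\delta_{(w_\eta)}\in A\}}\ind_{\{\ome_u\notin A\}}]$, i.e.\ $u$ times~\eqref{RFRI_1}. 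The main obstacle I anticipate is bookkeeping: cleanly separating what is genuinely $O(h^2)$ in the increment argument, and identifying how $\QQ$ arises as the Palm distribution associated to $P_{e_G}$. The conceptual engine throughout is the monotonicity of $A$, which converts signed differences $\ind_A(\ome+\delta)-\ind_A(\ome)$ into honest $\{0,1\}$-valued indicators and is what ultimately reconciles the three expressions.
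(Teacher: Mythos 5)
Your proposal is correct and reaches all three expressions, but it follows a genuinely different route from the paper on one key point. For~\eqref{RFRI_1} your argument (split labels into $[0,u]$ and $(u,u+h]$, use that~$\PP(H\ge 2)=O(h^2)$, and exploit monotonicity to write the increment as~$\ind_{\{\ome_u+\delta_{(w_\eta)}\in A\}}\ind_{\{\ome_u\notin A\}}$) coincides with the paper's Section~\ref{right_derivative}. For~\eqref{RFRI_3} you differentiate the Poisson mixture~$\sum_m q_m\,e^{-u\capa(G)}(u\capa(G))^m/m!$ term by term; the paper instead conditions on both~$M$ and~$H$ and computes the increment~$\PP^{u+h}(A)-\PP^u(A)$ in Section~\ref{right_derivative2}, which is the same computation up to where the differentiation is executed — your version is a little more streamlined once termwise differentiation is justified (as you note, by local uniform convergence of the derivative series). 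The genuine divergence is~\eqref{RFRI_2}: the paper establishes it by directly computing the \emph{left} derivative, introducing auxiliary counts~$V$ and~$V'$, expanding a binomial (their~\eqref{eq41}), and controlling two error terms through Lemmas~\ref{lemma41} and~\ref{lemma42}. You instead derive~\eqref{RFRI_2} from~\eqref{RFRI_1} via the Mecke (Slivnyak) identity applied to the Poisson process on~$W_{G,+}\times[0,u]$ with the finite intensity~$P_{e_G}\otimes\lambda_+|_{[0,u]}$, together with the normalization~$P_{e_G}=\capa(G)\,\QQ$. This is shorter and bypasses the error estimates altogether; what it costs is the reliance on the Mecke identity as a black box, whereas the paper's left-derivative computation is fully elementary and, as the authors remark before Section~\ref{right_derivative}, deliberately chosen for its instructive value. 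The paper explicitly acknowledges that one could instead establish one expression and then prove equality with the others — your Mecke argument is precisely such a bridge, and it is a valid one (the identity applies since the intensity is finite and the summand~$f\big((w,v),\ome\big)=\ind_A(\ome)\ind_{A^c}(\ome-\delta_{(w,v)})$ is nonnegative and measurable, with~$f\big((w,v),\ome+\delta_{(w,v)}\big)=\ind_A(\ome+\delta_{(w,v)})\ind_{A^c}(\ome)$, whose $v$-independence then pulls out the factor~$u$).
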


In words, in the right-hand side of~\eqref{RFRI_1} we have the probability that the event does not occur at level~$u$, but adding one more trajectory causes it to occur. Particularly at~$u=0$, it will be equal to zero if~$A$ is trivial, and it will be just the probability that~$A$ occurs under the presence of only one trajectory if~$A$ is not trivial. Also, it is worth mentioning that the term~$u\capa(G)$ in~\eqref{RFRI_3} is the unconditional expectation of~$M$. 

\end{section}

\begin{section}{Some applications} \label{s_applications}

In order to exhibit a first application of the expressions appearing in Theorem~\ref{RFRI_Thm}, observe that, if~$X_1$ and~$Y$ are two random variables with a Poisson distribution with parameter~$u\capa(G)$, and if a third random variable~$X_2$ is defined to be equal to~$Y+1$, then the total variation distance between the law~$\PP^u$ of the random interlacements process restricted to~$G$, at level~$u$, and the law~$\PP^u\otimes\QQ$ of the same process with one additional independent trajectory, say~$w_\eta$, satisfies
\begin{align}
\| \PP^u - \PP^u\otimes\QQ \|_{\mathop{\mathrm{TV}}} \leq \| X_1 - X_2 \|_{\mathop{\mathrm{TV}}},
\label{eq51}
\end{align}
where $\|\cdot \|_{\mathop{\mathrm{TV}}}$ is the total variation norm. Indeed, if the coupling of $X_{1,2}$ is successful, this yeilds a coupling of~$\PP^u$ and~$\PP^u\otimes\QQ$ by simply making the particles follow the same trajectories. 

We need the following elementary fact:
\begin{lem}
\label{lemma51}
For the random variables~$X_1$ and~$X_2$ defined as above, it holds that
\begin{align*}
\| X_1 - X_2 \|_{\mathop{\mathrm{TV}}} \leq \Big(u\capa(G)\Big)^{-1/2}.
\end{align*}
\end{lem}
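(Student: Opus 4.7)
The approach is to compute $\|X_1 - X_2\|_{\mathop{\mathrm{TV}}}$ exactly as a single Poisson mass value and then apply Stirling's inequality. Set $\lambda = u\,\capa(G)$. Observing that $P(X_2 = 0) = 0$ while for every $k \geq 1$ the ratio of the point masses satisfies
$$\frac{P(X_1 = k)}{P(X_2 = k)} = \frac{e^{-\lambda}\lambda^{k}/k!}{e^{-\lambda}\lambda^{k-1}/(k-1)!} = \frac{\lambda}{k},$$
one sees that the set $B = \{k \in \naturais : P(X_1 = k) \geq P(X_2 = k)\}$ is precisely $\{0, 1, \dots, \lfloor\lambda\rfloor\}$. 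This identifies the set on which the supremum in the definition of total variation is attained.

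Once $B$ is in hand, I would use the standard representation $\|X_1 - X_2\|_{\mathop{\mathrm{TV}}} = P(X_1 \in B) - P(X_2 \in B)$ together with the fact that $X_2$ has the same law as $X_1 + 1$, to obtain a telescoping:
$$\|X_1 - X_2\|_{\mathop{\mathrm{TV}}} = P(X_1 \leq \lfloor\lambda\rfloor) - P(X_1 \leq \lfloor\lambda\rfloor - 1) = P(X_1 = \lfloor\lambda\rfloor).$$
Thus the entire task reduces to bounding the mode probability of a Poisson$(\lambda)$ random variable by $\lambda^{-1/2}$.

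For the final step I would invoke Stirling's lower bound $k! \geq \sqrt{2\pi k}\,(k/e)^{k}$ (for $k \geq 1$) together with the elementary inequality $(1 + \theta/k)^{k} \leq e^{\theta}$. Writing $\lambda = k + \theta$ with $k = \lfloor\lambda\rfloor$ and $\theta \in [0,1)$, a short computation gives
$$P(X_1 = k) = \frac{e^{-\lambda}\lambda^{k}}{k!} \leq \frac{e^{-\theta}(1 + \theta/k)^{k}}{\sqrt{2\pi k}} \leq \frac{1}{\sqrt{2\pi k}},$$
and one then checks that $2\pi\lfloor\lambda\rfloor \geq \lambda$ whenever $\lambda \geq 1$ (using $\lfloor\lambda\rfloor \geq \lambda/2$ in that range), which yields the claimed bound. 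The edge case $\lambda < 1$ has $\lfloor\lambda\rfloor = 0$, and there it suffices to note $P(X_1 = 0) = e^{-\lambda} \leq 1 \leq \lambda^{-1/2}$.

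The argument is essentially mechanical; the only genuine idea is the reduction of $\|X_1 - X_2\|_{\mathop{\mathrm{TV}}}$ to the single mode probability $P(X_1 = \lfloor\lambda\rfloor)$, and the only mild nuisance is splitting into the two regimes $\lambda \geq 1$ and $\lambda < 1$ so that Stirling is applied only where $k \geq 1$.
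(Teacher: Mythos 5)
Your argument is internally sound and takes a genuinely different route from the paper's, so let me compare them, but first flag a convention mismatch.

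The paper defines $\| X_1 - X_2 \|_{\mathop{\mathrm{TV}}}$ as the \emph{full} $\ell^1$ distance $\sum_k |P(X_1=k)-P(X_2=k)|$ (this is explicit in the first line of its proof). You use the supremum definition $\sup_B(P(X_1\in B)-P(X_2\in B))$, which is exactly half of that. Your chain of identities — identifying the maximizing set $B=\{0,\dots,\lfloor\lambda\rfloor\}$, telescoping to reduce everything to $P(X_1=\lfloor\lambda\rfloor)$, and then applying Stirling plus the case split at $\lambda=1$ — is correct for the sup-normalized TV, but it proves a statement that is a factor of two weaker than what the paper asserts. Worse, the Stirling step does not simply scale up: to match the paper's convention you would need $2\,P(X_1=\lfloor\lambda\rfloor)\le\lambda^{-1/2}$, i.e.\ $2\pi\lfloor\lambda\rfloor\ge 4\lambda$, and this fails for $\lambda\in(\pi/2,2)$ even though the underlying inequality is in fact true there. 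So the mode-probability-plus-Stirling route genuinely loses sharpness in a small range.

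The paper's own proof is shorter and avoids all of this. It observes that, since $P(X_2=k)=P(X_1=k-1)$, the $\ell^1$ distance telescopes into
$\sum_{k\ge 0}\bigl|\tfrac{k}{\theta}-1\bigr|\,P(X_1=k)=E\bigl|\tfrac{X_1}{\theta}-1\bigr|$,
and then a single Cauchy--Schwarz step combined with $\mathrm{Var}(X_1)=\theta$ gives $\theta^{-1/2}$ directly, uniformly in $\theta>0$ and with no case analysis. In short: your maximizing-set reduction is a valid and instructive way to \emph{compute} the TV distance exactly, but it leaves the real work to Stirling, which is both less elegant and just barely too weak for the normalization the paper actually uses; the paper's $E|X_1/\theta-1|$ representation together with Cauchy--Schwarz is the cleaner and sharper device.
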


\begin{proof}
To simplify the notation, abbreviate~$u\capa(G)=\theta$, which can be seen as a generic parameter for the Poisson distribution. Then, we have
\begin{align*}
\| X_1 - X_2 \|_{\mathop{\mathrm{TV}}} =
e^{-\theta}+\sum_{k=1}^{\infty} \Big|\frac{e^{-\theta}\theta^k}{k!}
-\frac{e^{-\theta}\theta^{k-1}}{(k-1)!}\Big| 
= \sum_{k=0}^{\infty} \Big|\frac{k}{\theta}-1\Big| \times
\frac{e^{-\theta}\theta^k}{k!} = E\Big|\frac{X_1}{\theta}-1\Big|.
\end{align*}
But 
\begin{align*}
E\Big|\frac{X_1}{\theta}-1\Big| 
= \frac{1}{\theta}E|X_1-\theta| \leq \frac{1}{\theta}
\sqrt{E|X_1-\theta|^2} = \theta^{-1/2},
\end{align*}
which shows the claim.
\end{proof}

Now observe that, since the event~$A$ in Theorem~\ref{RFRI_Thm} is increasing, then~$\{\ome_u\in A\}\subset\{\ome_{u} +\delta_{(w_\eta)}\in A\}$. Therefore the expectation in expression~\eqref{RFRI_1} will be
\begin{align*}
\widehat{\EE}^u\Big[\ind_{\{\ome_{u} +\delta_{(w_\eta)}\in A\}}\ind_{\{\ome_u\notin A\}}\Big] &= \PP^u\otimes\QQ\Big[\ome_{u} +\delta_{(w_\eta)}\in A, \ome_u\notin A\Big] \\
&= \PP^u\otimes\QQ\big[A\big] - \PP^u\big[A\big] \\
&\leq \| \PP^u - \PP^u\otimes\QQ \|_{\mathop{\mathrm{TV}}}.
\end{align*}

From the above, along with~\eqref{eq51}, Lemma~\ref{lemma51} and~\eqref{RFRI_1}, for increasing events we obtain the following \emph{universal} upper bound on the derivative:
\begin{align*}
\frac{d}{du}\PP^u(A) \leq \sqrt{\frac{\capa(G)}{u}}.
\end{align*}
Also, from~\eqref{RFRI_2} we have the following upper bound on the expected number of plus-pivotal trajectories for the event~$A$ on configuration~$\ome_u$, 
\begin{align}
\EE^u\Big[N^{+}_{\ome_u}\Big] \leq \sqrt{u\capa(G)}.
\label{est_pivotal}
\end{align}
Upper bounds on the number of plus-pivotal trajectories may prove useful in diverse situations. For example, assume that we know what is going on in some region, and want to know, how can it affect the occurrence of some (increasing)
event in another (distant) region. Assume also that it is known that only with a small probability there is a trajectory that crosses both regions. Then, one may note that, even if such a trajectory exists, it is probably not pivotal for the event in the second region, since the (relative) number of pivotal trajectories cannot be large, as~\eqref{est_pivotal} shows.

In fact, \eqref{est_pivotal} can be further improved if one is allowed to vary the parameter~$u$. 
Indeed, \eqref{RFRI_2} implies that, for~$0\leq u_1<u_2<\infty$, 
\begin{align*}
\int_{u_1}^{u_2} \frac{1}{u}\EE^u\Big[N^{+}_{\ome_u}\Big] du
=\int_{u_1}^{u_2} \frac{d}{du}\PP^u(A) du = \PP^{u_2}(A)-\PP^{u_1}(A)\leq 1.
\end{align*} 
Then, for some constant~$\alpha>1$, define the following subset of~$[u_1,u_2]$,
\begin{align*}
D_{u_1,u_2,\alpha} = \Big\{ u\in [u_1,u_2] ~:~ \frac{1}{u}\EE^u\Big[N^{+}_{\ome_u}\Big] \leq \frac{\alpha}{u_2-u_1}\Big\},
\end{align*}
and observe that the Lebesgue measure of its complementary in the interval~$[u_1,u_2]$ is
\begin{align*}
\lambda_+\Big([u_1,u_2]\setminus 
D_{u_1,u_2,\alpha}\Big) = \int_{u_1}^{u_2} 
\ind_{[u_1,u_2]\setminus D_{u_1,u_2,\alpha}}(u) du 
< \frac{u_2-u_1}{\alpha} \int_{u_1}^{u_2}\frac{1}{u}\EE^u\Big[N^{+}_{\ome_u}\Big] du \leq \frac{u_2-u_1}{\alpha},
\end{align*}
so that
\begin{align}
\lambda_+\Big(D_{u_1,u_2,\alpha}\Big) > (u_2-u_1)\Big(1-\frac{1}{\alpha}\Big).
\label{eq52}
\end{align}
Thus we have
\begin{align*}
\EE^u\Big[N^{+}_{\ome_u}\Big] \leq \frac{u\alpha}{u_2-u_1},
 \mbox{ for all } u \mbox{ in } D_{u_1,u_2,\alpha},
\end{align*}
where~$D_{u_1,u_2,\alpha}$ satisfies~\eqref{eq52}. In words, regardless on the size of the set where the event takes place, the expected number of plus-pivotal trajectories in ``most'' (in the sense of~\eqref{eq52}) points 
of the interval~$[u_1,u_2]$ cannot exceed a quantity depending on~$u_2-u_1$ and~$\alpha$.

To mention another possible application, consider the expression~\eqref{RFRI_3}, and note that, when it is possible to obtain an explicit expression for the probability~$\PP^u(A)$ (and for its derivative), we can establish an explicit expression for the expected number of trajectories in the interlacements restricted to~$G$, at level~$u$, conditioned on the occurrence of the event~$A$ at level~$u$, 
\begin{align*}
\EE^u\Big[ M\mid \{\ome_u\in A\} \Big] = u\capa(G) + u\frac{\frac{d}{du}\PP^u(A)}{\PP^u(A)}, 
\end{align*}
recalling that~$u\capa(G)$ is the (unconditional) expectation of~$M$.

For example, consider the increasing event~$A=\{G\neq\V^u_G\}=\{\I^u_G\neq\emptyset\}$, where the interlacement and the vacant sets restricted to~$G$ at level~$u$, $\I^u_G$ and~$\V^u_G$, are given in~\eqref{eq21}. In this case, $\PP^u(A) = 1- \PP^u(G=\V^u_G) = 1- e^{-u\capa(G)}$,
so
\begin{align*}
\EE^u\Big[ M\mid \{\ome_u\in A\} \Big] = \frac{u\capa(G)}{1-e^{-u\capa(G)}}. 
\end{align*}

\end{section}

\begin{section}{Proof of Theorem~\ref{RFRI_Thm}}\label{proofs}

From the analyticity of~$\PP^u(A)$, we can naturally conclude that this probability, as a function of~$u$, is indeed differentiable on~$[0,\infty)$. So, in order to prove Theorem~\ref{RFRI_Thm}, it will be sufficient to compute the right and left derivatives
\begin{align*}  
\displaystyle\lim_{h\downarrow 0}\frac{\PP^{u+h}(A)-\PP^u(A)}{h} \ \ \ \mbox{  and  } \ \ \ \displaystyle\lim_{h\downarrow 0}\frac{\PP^u(A)-\PP^{u-h}(A)}{h}, 
\end{align*}
which are automatically equal.

Although it is possible to prove the theorem in another way, for example by first obtaining just one of that expressions for the derivative and then showing that the others are equal to this one, we chose to compute the side derivatives because we believe it is more instructive.

\begin{subsection}{The right derivative}\label{right_derivative}

We begin with the right derivative. 
Recall that~$A$ is an increasing event with respect to~$\I^u_G$, supported on the finite set~$G\subset\Z^d$.
For some~$u> 0$ and $h>0$, we first compute the probability~$\PP^{u+h}(A)$. We have 
\begin{align*} 
\PP^{u+h}(A) = \EE^u\Big[\PP^{u+h}(A\mid \ome_u)\Big] = \EE^u\Big[\PP(\ome_{u+h}\in A\mid \ome_u)\Big].
\end{align*}

Since the event~$A$ is increasing, if~$\ome_u\in A$ then by definition we also have~$\ome_{u+h}\in A$, so
\begin{align*}
\PP^{u+h}(A\mid \ome_u)=\PP(\ome_{u+h}\in A \mid  \ome_u)=1, \mbox{ on } \{\ome_u\in A\}.
\end{align*}

On the other hand, in order to establish an expression for~$\PP(\ome_{u+h}\in A \mid  \ome_u)$ on~$\{\ome_u\notin A\}$, we observe that it is possible to describe the coupling of the interlacements processes restricted to~$G$, for all levels~$u\geq 0$, by considering a Poisson process in~$\mathbb{R_+}$ with intensity equal to~$\capa(G)$, in such a way that the number of random walks trajectories in the process at level~$u$ will be given by the number of points of this Poisson process on the interval~$[0,u]$ (see Figure~\ref{figure2}). Recall that we denote this number by~$M$, and~$M\sim\mbox{Poisson}(u\capa(G))$. 
\begin{figure}[ht]
\centering
\includegraphics[scale=0.7]{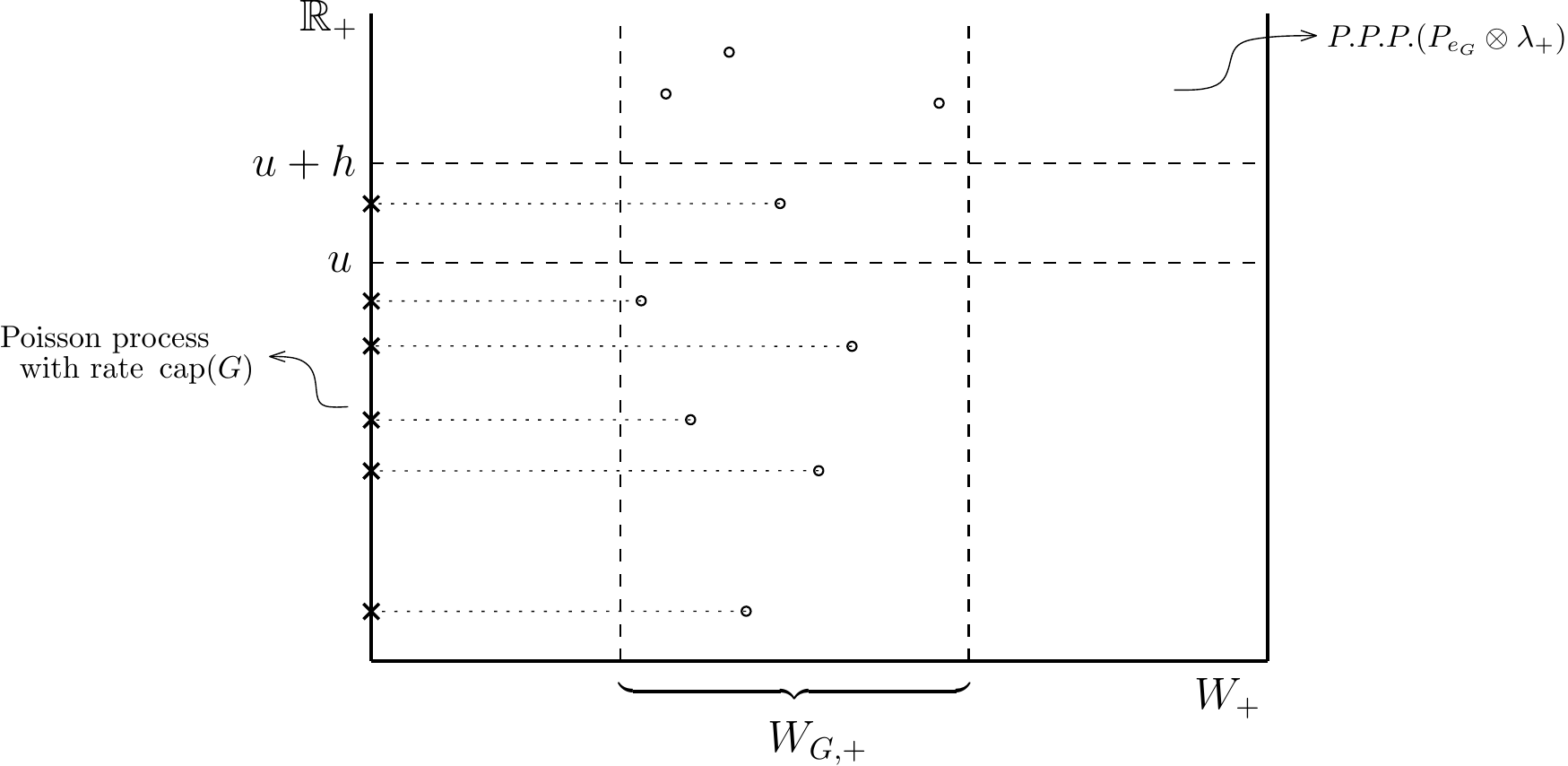}
\caption{\footnotesize The coupling of the interlacements processes restricted to~$G$, for all non-negative levels. In this picture, by increasing the level from~$u$ to~$u+h$, only one trajectory is added to the configuration. Recall the spaces~$W_+$ and~$W_{G,+}$, defined in Section~\ref{definitions}. Here, we write~$P.P.P.(P_{e_G}\otimes\lambda_+)$ for the Poisson point process on~$W_+\times\R_+$ with intensity measure~$P_{e_G}\otimes\lambda_+$.}
\label{figure2}
\end{figure}

So, by increasing the level of the process (restricted to~$G$) from~$u$ to~$u+h$, it is possible that some new trajectories are included in configuration~$\ome_u$, thus forming configuration~$\ome_{u+h}$ (see Figure~\ref{figure2} again). Then, denote by~$H$ the number of such trajectories that are eventually included in this case, noting that the value of~$H$ is given by the number of points of that Poisson process on the interval~$(u, u+h]$, so that~$H\sim\mbox{Poisson}(h\capa(G))$.

Observe now that, given the event~$\{H=0\}$, the configuration~$\ome_{u+h}$ will be equal to~$\ome_{u}$, whereas given the event~$\{H=1\}$, configuration~$\ome_{u+h}$ will be of the form~$\ome_u+\delta_{(w_\eta,u_\eta)}$, where~$w_\eta$ belongs to~$W_{G,+}$ and represents the (only) trajectory which is added to configuration~$\ome_u\in\Omega_u^{G,+}$, with an index~$u_\eta$. Since index~$u_\eta$ is irrelevant in this context, in the sense that we are interested only on the configuration of trajectories that is obtained after the inclusion of~$w_\eta$, then we write only~$\ome_u+\delta_{(w_\eta)}$ rather than~$\ome_u+\delta_{(w_\eta,u_\eta)}$ from now on.

Then, we can write
\begin{align*}
\PP(\ome_{u+h}\in A\mid \ome_u) = \sum_{k\geq0} \PP(\ome_{u+h}\in A\mid H=k,\ \ \ome_u)\PP(H=k\mid \ome_u). 
\end{align*}

But, taking into account the above observations about conditioning on the value of~$H$, we can conclude that, on~$\{\ome_u\notin A\}$, $\PP(\ome_{u+h}\in A\mid H=0, \ome_u) = 0$, and also
\begin{align*}
\PP(\ome_{u+h}\in A\mid H=1,\ \ \ome_u) = \PP(\ome_{u} +\delta_{(w_\eta)}\in A\mid H=1,\ \ \ome_u) = \QQ(\ome_{u} +\delta_{(w_\eta)}\in A),
\end{align*}
where the last expression represents the probability of the event~$\{\ome_{u} +\delta_{(w_\eta)}\in A\}$, under the law of the trajectory~$w_\eta$.

In addition, observe that, for any~$k\geq 0$, the probabilities~$\PP(\ome_{u+h}\in A\mid H=k,\ \ \ome_u)$ do not depend on~$h$, but only on~$\ome_u$ and on the number~$k$ of points of the above mentioned Poisson process in~$(u, u+h]$. So, we write $\PP(\ome_{u+h}\in A\mid H=k,\ \ \ome_u) := a_k^{\ome_u}$ for all $k\geq 0$, in order to simplify the notation.

Thus, since~$\PP(H=k\mid \ome_u)=\PP(H=k)$ for all~$k\geq 0$, and~$\PP(H=1)=h\capa(G)e^{-h\capa(G)}$, we have, on the event~$\{\ome_u\notin A\}$,
\begin{align*}
\PP^{u+h}(A\mid \ome_u) = \QQ(\ome_{u} +\delta_{(w_\eta)}\in A)h\capa(G)e^{-h\capa(G)} + R_h, 
\end{align*}
where 
\begin{align*} 
R_h = \sum_{k\geq2} a_k^{\ome_u}\PP(H=k) = \sum_{k\geq2} a_k^{\ome_u}\frac{e^{-h\capa(G)}(h\capa(G))^k}{k!}. 
\end{align*}

It is then possible to write
\begin{align*}
\PP^{u+h}(A\mid \ome_u) =  \QQ(\ome_{u} +\delta_{(w_\eta)}\in A)h\capa(G)e^{-h\capa(G)}\ind_{\{\ome_u\notin A\}} + R_h\ind_{\{\ome_u\notin A\}} + \ind_{\{\ome_u\in A\}}, 
\end{align*}
and taking the expectation under the law~$\PP^u$, we finally have 
\begin{align*}
\PP^{u+h}(A) = \EE^u\Big[ \QQ(\ome_{u} +\delta_{(w_\eta)}\in A)\ind_{\{\ome_u\notin A\}}\Big]h\capa(G)e^{-h\capa(G)} + \EE^u\Big[R_h\ind_{\{\ome_u\notin A\}}\Big] + \PP^u(A).
\end{align*}

But using the Monotone Convergence Theorem, it is elementary to show that the expectation of~$R_h\ind_{\{\ome_u\notin A\}}$ is equal to~$o(h)$ when~$h\downarrow 0$, and so we can conclude that
\begin{align*}
\displaystyle\lim_{h\downarrow 0}\frac{\PP^{u+h}(A)-\PP^u(A)}{h} = \capa(G)\EE^u\Big[ \QQ(\ome_{u} +\delta_{(w_\eta)}\in A)\ind_{\{\ome_u\notin A\}}\Big].
\end{align*}

In order to rewrite this last expression in a more intuitive way, simply note that
\begin{align*}
\EE^u\Big[ \QQ(\ome_{u} +\delta_{(w_\eta)}\in A)\ind_{\{\ome_u\notin A\}}\Big] = \widehat{\EE}^u\Big[\ind_{\{\ome_{u} +\delta_{(w_\eta)}\in A\}}\ind_{\{\ome_u\notin A\}}\Big],
\end{align*}
recalling that~$\widehat{\EE}^u$ is the expectation under the product measure~$\PP^u\otimes \QQ$. This proves expression~\eqref{RFRI_1} for positive values of~$u$.

Now, for~$u=0$ and~$h>0$, it is clear that, if~$A$ is trivial then~$\PP^h(A)=\PP^0(A)=1$, so that the right derivative will be null, which coincides with~\eqref{RFRI_1} in this case. On the other hand, if~$A$ is not trivial, it is elementary to use the same arguments as above to see that the right derivative will again be given by the same expression (with~$u=0$), that is, we have the capacity of~$G$ times the probability that~$A$ occurs under the presence of just one trajectory of the random interlacements. Thus, expression~\eqref{RFRI_1} is valid also for~$u=0$.

\end{subsection}

\begin{subsection}{The left derivative}\label{left_derivative}

Let~$u> 0$ and~$0<h<u$. 
We have
\begin{align*} 
\PP^{u-h}(A) = \PP(\ome_{u-h}\in A) = \EE^u\Big[\PP(\ome_{u-h}\in A\mid \ome_u)\Big],
\end{align*}
and since the event~$A$ is increasing, then~$\PP(\ome_{u-h}\in A \mid \ome_u) = 0$ on~$\{\ome_u\notin A\}$.

Consider again the random variable~$M$ which represents the number of trajectories in configuration~$\ome_u$, recalling that we interpret it as the number of points of the Poisson process with intensity~$\capa(G)$ in the interval~$[0,u]$. Thus we know that, given the value of~$M$, each one of these points is uniformly and independently distributed on this interval.

Let us denote now by~$V$ the number of points of that same Poisson process, which belong to~$(u-h,u]$ among the~$N^{+}_{\ome_u}$ points corresponding to the plus-pivotal trajectories for the event~$A$ on configuration~$\ome_u$, and additionally denote by~$V'$ the number of points of this Poisson process which belong to the same interval, $(u-h,u]$, but now among the~$M$ points of the process at level~$u$.

Then the probability that~$A$ ceases to occur under the interlacement at level~$u-h$, given that it occurs at level~$u$, will be equal to the probability that at least one of the~$N^{+}_{\ome_u}$ points of the Poisson process belongs to~$(u-h,u]$, or a collection with at least two among the~$M$ points belong to that interval and the simultaneous removal of all trajectories corresponding to the points in this collection causes~$A^C$ to occur (see Figure~\ref{figure3}).
\begin{figure}[ht]
\centering
\includegraphics[scale=0.7]{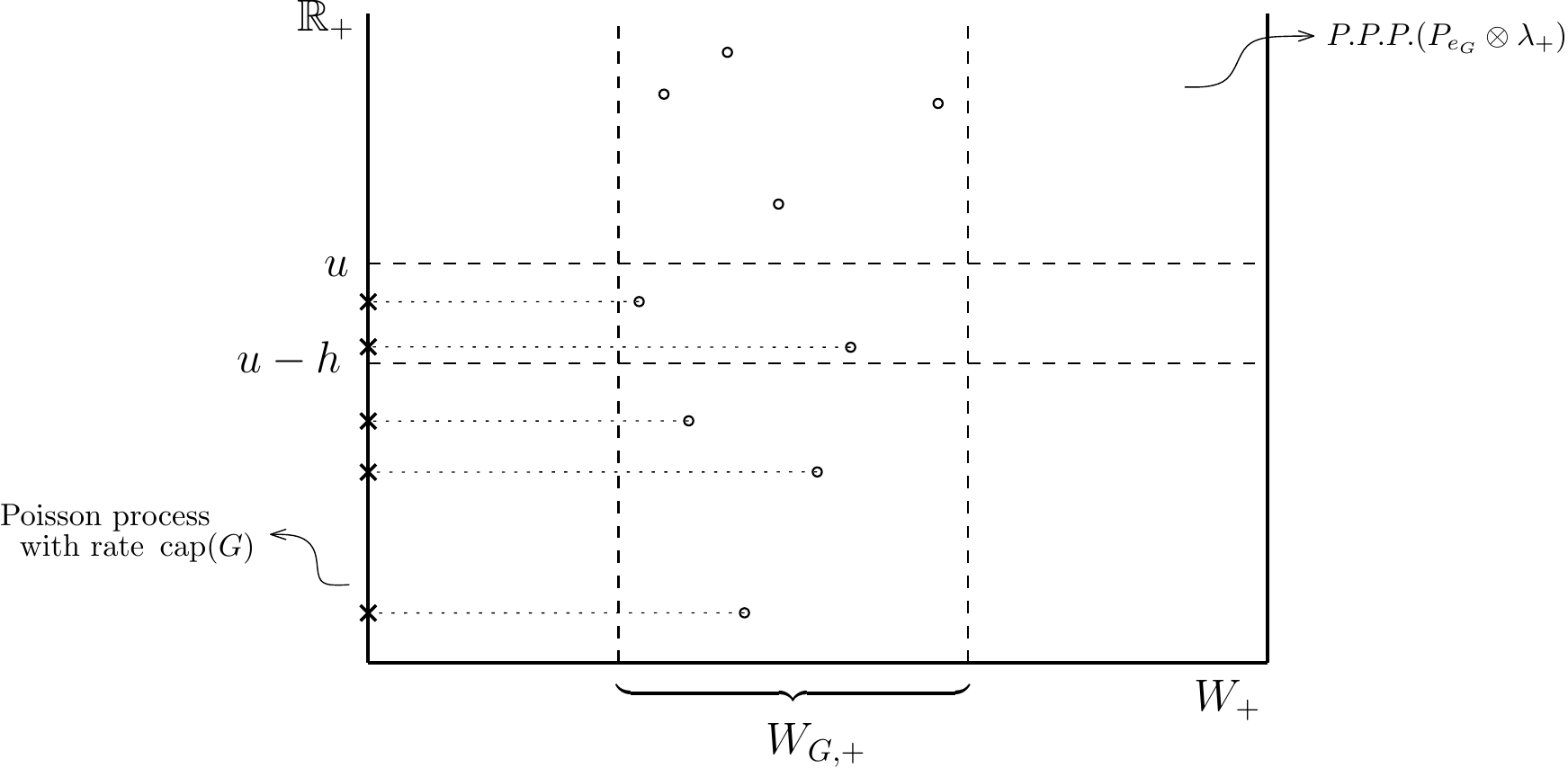}
\caption{\footnotesize By decreasing the level from~$u$ to~$u-h$, some trajectories can be removed from the configuration. If~$A$ occurs at level~$u$ and at least one of the removed trajectories is plus-pivotal, then the event ceases to occur at level~$u-h$. Similarly, if a collection of trajectories with indexes between~$u-h$ and~$u$ causes~$A^C$ to occur when they are simultaneously removed, then the event will also cease to occur at level~$u-h$. Again, recall the spaces~$W_+$ and~$W_{G,+}$, defined in Section~\ref{definitions}. We write~$P.P.P.(P_{e_G}\otimes\lambda_+)$ for the Poisson point process on~$W_+\times\R_+$ with intensity measure~$P_{e_G}\otimes\lambda_+$.}
\label{figure3}
\end{figure}

Precisely, for~$\ome_u=\sum_{i\geq 1}\delta_{(w_i,u_i)}$ in~$\Omega_{u}^{G,+}$, let us consider the event
\begin{align*}
B(V') := \Big\{ \ind_A(\ome_u)=1, \ \  \ind_A\Big(\ome_u-\sum_{i: u-h<u_i\leq u}\delta_{(w_i,u_i)}\Big)=0\Big\},
\end{align*}
that is, the event on which~$A$ occurs under the process at level~$u$, but it ceases to occur if all trajectories with indexes between~$u-h$ and~$u$ are simultaneously removed, noting that the number of these trajectories that are removed is just~$V'$.

Then, according to the definitions of the random variables~$V$ and~$V'$, we have, on~$\{\ome_u\in A\}$,
\begin{align*}
\PP(\ome_{u-h}\notin A \mid M=m, N^{+}_{\ome_u}=n, V=k, \ome_u) =\left\{
\begin{array}{ll}
p^{m,n,\ome_u}, & \mbox{ if } k=0, \\
1, & \mbox{ if } k\geq 1, 
\end{array}
\right.
\end{align*}
where~$p^{m,n,\ome_u} := \PP\Big(\{V'\geq 2\}\cap B(V') \mid M=m, N^{+}_{\ome_u}=n, V=0, \ome_u \Big)$, so that
{\allowdisplaybreaks
\begin{align*}
\PP(\ome_{u-h}\notin A \mid \ome_u) &=  \sum_{m=0}^{\infty}\sum_{n=0}^m\sum_{k=1}^n \PP(M=m, N^{+}_{\ome_u}=n, V=k \mid \ome_u) \\*
& \hspace{1.5cm} + \sum_{m=0}^{\infty}\sum_{n=0}^m  \PP(M=m, N^{+}_{\ome_u}=n, V=0 \mid \ome_u)p^{m,n,\ome_u} \\*
&= \PP\Big(V\geq 1 \mid \ome_u\Big) + \PP\Big(\{V=0\}\cap\{V'\geq 2\}\cap B(V') \mid \ome_u \Big),
\end{align*}}
on~$\{\ome_u\in A\}$.

The first term in the last expression can be calculated by taking into account the above comments about the uniform distribution, in such a way that, on~$\{\ome_u\in A\}$,
\begin{align}
\PP(V\geq 1 \mid \ome_u) =  \sum_{k=1}^{\infty} \binom{N^{+}_{\ome_u}}{k}\Big(\frac{h}{u}\Big)^k \Big(1-\frac{h}{u}\Big)^{N^{+}_{\ome_u}-k} \ind_{\{N^{+}_{\ome_u}\geq k\}}.  
\label{eq41}
\end{align}

Thus we can finally write
\begin{align*}
\PP(\ome_{u-h}\in A \mid \ome_u) &= \ind_{\{\ome_u\in A\}} - \sum_{k=1}^{\infty} \binom{N^{+}_{\ome_u}}{k}\Big(\frac{h}{u}\Big)^k \Big(1-\frac{h}{u}\Big)^{N^{+}_{\ome_u}-k} \ind_{\{N^{+}_{\ome_u}\geq k\}}\ind_{\{\ome_u\in A\}}\\
& \hspace{4cm} -\PP\Big(\{V=0\}\cap\{V'\geq 2\}\cap B(V') \mid \ome_u \Big)\ind_{\{\ome_u\in A\}},
\end{align*}
and taking the expectation with respect to~$\PP^u$, by the Monotone Convergence Theorem we have
\begin{align}
\PP^{u-h}(A) &= \PP^u(A) - \sum_{k=1}^{\infty} \Big(\frac{h}{u}\Big)^k \EE^u\Big[\binom{N^{+}_{\ome_u}}{k} \Big(1-\frac{h}{u}\Big)^{N^{+}_{\ome_u}-k} \ind_{\{N^{+}_{\ome_u}\geq k\}}\ind_{\{\ome_u\in A\}}\Big] \nonumber\\
& \hspace{3cm} -\EE^u\Big[\PP\Big(\{V=0\}\cap\{V'\geq 2\}\cap B(V') \mid \ome_u \Big)\ind_{\{\ome_u\in A\}}\Big]. 
\label{eq42}
\end{align}

At this point, we need the next two lemmas.

\begin{lem} 
\begin{align*} 
r_h := \sum_{k=2}^{\infty} \Big(\frac{h}{u}\Big)^k \EE^u\Big[\binom{N^{+}_{\ome_u}}{k} \Big(1-\frac{h}{u}\Big)^{N^{+}_{\ome_u}-k} \ind_{\{N^{+}_{\ome_u}\geq k\}}\ind_{\{\ome_u\in A\}}\Big] = o(h), \ \ \mbox{ when } h\downarrow 0 .
\end{align*}
\label{lemma41}
\end{lem}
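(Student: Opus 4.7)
My plan is to interchange the sum and the expectation via Tonelli (all summands are non-negative), then recognize the inner sum as a binomial tail probability, bound that tail by a quadratic in $N^+_{\ome_u}$, and conclude by observing that $N^+_{\ome_u}\leq M$ and that $M$ has finite second moment under $\PP^u$.

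After the interchange the quantity inside the expectation becomes $\ind_{\{\ome_u\in A\}}\cdot\PP(V\geq 2\mid N^+_{\ome_u})$, where, conditionally on $N^+_{\ome_u}$, the variable $V\sim\mathrm{Bin}(N^+_{\ome_u},h/u)$ counts how many of the plus-pivotal indices fall in $(u-h,u]$ (recall, as in the derivation of \eqref{eq41}, that these indices are uniform on $[0,u]$ given their number). A union bound over pairs — equivalently, dropping the factor $(1-h/u)^{N^+_{\ome_u}-k}\leq 1$ and keeping the first non-trivial term — gives
\begin{align*}
\PP\bigl(V\geq 2\mid N^+_{\ome_u}\bigr)\leq \binom{N^+_{\ome_u}}{2}\Bigl(\frac{h}{u}\Bigr)^{2}\leq \frac{(N^+_{\ome_u})^2}{2}\Bigl(\frac{h}{u}\Bigr)^{2}.
\end{align*}

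Plugging this in yields $r_h\leq \frac{h^2}{2u^2}\,\EE^u\bigl[(N^+_{\ome_u})^2\ind_{\{\ome_u\in A\}}\bigr]$. Since every plus-pivotal trajectory is, by definition, one of the trajectories of $\ome_u$, we have $N^+_{\ome_u}\leq M$ pointwise, and $M\sim\mathrm{Poisson}(u\capa(G))$ has finite second moment $u\capa(G)+(u\capa(G))^2$. Therefore $r_h=O(h^2)=o(h)$, which is the claim. I do not anticipate a real obstacle: the only step requiring a word of justification is the interchange of sum and expectation, which is immediate from Tonelli since every term is non-negative; everything else is a routine second-moment estimate.
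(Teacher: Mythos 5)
Your proof is correct, and it takes a genuinely cleaner route than the paper's. The paper bounds each binomial coefficient crudely by the central one, $\binom{N^+_{\ome_u}}{k}\leq\binom{N^+_{\ome_u}}{\lfloor N^+_{\ome_u}/2\rfloor}$, drops the $(1-h/u)^{N^+_{\ome_u}-k}$ factor, sums the geometric series in $k$, and is then left having to verify that $\EE^u\big[\binom{M}{\lfloor M/2\rfloor}\big]<\infty$ — a quantity that grows roughly like $\EE[2^M]$, so this really uses the exponential moments of the Poisson variable $M$. You instead recognize the inner sum as the binomial tail $\PP(V\geq 2\mid N^+_{\ome_u})$ with $V\sim\mathrm{Bin}(N^+_{\ome_u},h/u)$ and apply the union bound over pairs to get $\PP(V\geq 2\mid N^+_{\ome_u})\leq\binom{N^+_{\ome_u}}{2}(h/u)^2$, after which only the second moment $\EE^u[M^2]$ is needed. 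That is a strictly milder integrability requirement and gives the cleaner explicit bound $r_h\leq\frac{h^2}{2u^2}\big(u\capa(G)+(u\capa(G))^2\big)$. One small caveat on wording: your parenthetical ``equivalently, dropping the factor $(1-h/u)^{N^+_{\ome_u}-k}\leq 1$ and keeping the first non-trivial term'' is not literally an equivalent derivation — dropping that factor and discarding the $k\geq 3$ terms is not a monotone operation on its own; the union bound is what actually justifies the inequality, and you do state it correctly first, so the proof stands.
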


\begin{proof} Since~$h<u$, we have
\begin{align*}
\binom{N^{+}_{\ome_u}}{k} \Big(1-\frac{h}{u}\Big)^{N^{+}_{\ome_u}-k} \ind_{\{N^{+}_{\ome_u}\geq k\}}\ind_{\{\ome_u\in A\}} \leq \binom{N^{+}_{\ome_u}}{\lfloor N^{+}_{\ome_u}/2\rfloor},
\end{align*}
where~$\lfloor b \rfloor$ is the integer part of~$b$, and by monotonicity of expectation~$\EE^u[~\cdot~]$, 
\begin{align*}
r_h \leq \EE^u\Big[\binom{N^{+}_{\ome_u}}{\lfloor N^{+}_{\ome_u}/2\rfloor}\Big]\sum_{k=2}^{\infty} \Big(\frac{h}{u}\Big)^k = \EE^u\Big[\binom{N^{+}_{\ome_u}}{\lfloor N^{+}_{\ome_u}/2\rfloor}\Big] \frac{\Big(\frac{h}{u}\Big)^2}{1-\Big(\frac{h}{u}\Big)}.
\end{align*}

But~$N^{+}_{\ome_u}\leq M$, which implies that
\begin{align*}
\binom{N^{+}_{\ome_u}}{\lfloor N^{+}_{\ome_u}/2\rfloor} \leq \binom{M}{\lfloor M/2\rfloor},
\end{align*}
and the expectation of the term on the right is finite, because~$M$ is Poisson distributed.

This concludes the proof of Lemma~\ref{lemma41}.
\end{proof}

\begin{lem} 
\begin{align*} 
r_h':= \EE^u\Big[\PP\Big(\{V=0\}\cap\{V'\geq 2\}\cap B(V') \mid \ome_u \Big)\ind_{\{\ome_u\in A\}}\Big]=o(h), \ \ \mbox{ when } h\downarrow 0.
\end{align*}
\label{lemma42}
\end{lem}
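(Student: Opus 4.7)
The plan is to bound $r_h'$ crudely by $\PP^u(V'\geq 2)$, using the fact that seeing two or more points of a Poisson process inside a window of length $h$ is already an event of probability $O(h^2)$. Concretely, from the obvious inclusion
\[
\{V=0\}\cap\{V'\geq 2\}\cap B(V')\subset\{V'\geq 2\}
\]
and the trivial estimate $\ind_{\{\ome_u\in A\}}\leq 1$, one gets
\[
r_h'\leq\EE^u\big[\PP(V'\geq 2\mid\ome_u)\big]=\PP^u(V'\geq 2).
\]

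Next I would invoke the coupling described in Section~\ref{right_derivative}: the indices $u_i$ of the trajectories composing $\ome_u$ are the points of a homogeneous Poisson process on $\R_+$ of intensity $\capa(G)$ (restricted to $[0,u]$). Under this coupling, $V'$ is simply the number of points of that Poisson process lying in the window $(u-h,u]$, so $V'\sim\mbox{Poisson}(h\capa(G))$ and
\[
\PP^u(V'\geq 2)=1-e^{-h\capa(G)}-h\capa(G)\,e^{-h\capa(G)}=\tfrac{1}{2}(h\capa(G))^2+O(h^3),
\]
which is $O(h^2)=o(h)$ as $h\downarrow 0$.

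The apparent difficulty is the combinatorial content of $B(V')$, which in principle could require a delicate analysis of how the removed trajectories interact with the rest of $\ome_u$ in destroying~$A$. The key observation is that, since we only need to beat a single power of $h$, the event $B(V')$ can be dropped entirely: the probability of having two or more trajectories in the window $(u-h,u]$ at all is already $O(h^2)$, so no finer analysis of $B(V')$ or of the plus-pivotal structure is needed.
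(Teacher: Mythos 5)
Your proof is correct and takes a genuinely cleaner route than the paper for the final step. Both arguments begin identically, by observing that the conjunction
\(\{V=0\}\cap\{V'\geq 2\}\cap B(V')\)
is contained in \(\{V'\geq 2\}\), dropping the indicator \(\ind_{\{\ome_u\in A\}}\), and reducing the task to showing \(\PP^u(V'\geq 2)=o(h)\). From there the paper argues conditionally: given \(M\), the \(M\) indices are i.i.d.\ uniform on \([0,u]\), so \(V'\mid M\sim\mbox{Binomial}(M,h/u)\); it then writes
\(\PP(V'\geq 2\mid\ome_u)=\sum_{k\geq 2}\binom{M}{k}(h/u)^k(1-h/u)^{M-k}\ind_{\{M\geq k\}}\),
takes the expectation, and bounds the resulting series by \(\EE^u\bigl[\binom{M}{\lfloor M/2\rfloor}\bigr]\cdot(h/u)^2/(1-h/u)\), appealing to the finiteness of the latter expectation because \(M\) is Poisson. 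You shortcut this entirely by noting that, under the coupling of Section~\ref{right_derivative}, \(V'\) is marginally the number of points of a Poisson process of intensity \(\capa(G)\) landing in the window \((u-h,u]\), hence \(V'\sim\mbox{Poisson}(h\capa(G))\), and \(\PP^u(V'\geq 2)=1-e^{-h\capa(G)}-h\capa(G)e^{-h\capa(G)}=O(h^2)\). The two computations agree — the paper's binomial mixture over a Poisson \(M\) is exactly a Poisson of parameter \(h\capa(G)\) — but your version avoids the auxiliary bound \(\binom{M}{k}\leq\binom{M}{\lfloor M/2\rfloor}\) and the finiteness check, and it also gives the sharper explicit constant \(\tfrac12\capa(G)^2\) in the \(O(h^2)\) term, whereas the paper settles for an implicit constant. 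This is a genuine simplification; the paper presumably proceeds via the conditional binomial because the parallel Lemma~\ref{lemma41} (bounding \(r_h\)) truly needs the conditional structure, since \(N^+_{\ome_u}\) has no Poisson marginal, so the same template is reused for \(r_h'\) even though it is not required there.
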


\begin{proof} First of all note that
\begin{align}
\PP\Big(\{V=0\}\cap\{V'\geq 2\}\cap B(V') \mid \ome_u \Big)\ind_{\{\ome_u\in A\}} \leq \PP(V'\geq 2 \mid \ome_u),
\label{eq43}
\end{align}
and using a similar reasoning to that employed when establishing~\eqref{eq41}, one can see that
\begin{align*}
\PP(V' \geq 2 \mid \ome_u ) =  \sum_{k=2}^{\infty} \binom{M}{k}\Big(\frac{h}{u}\Big)^k \Big(1-\frac{h}{u}\Big)^{M-k}\ind_{\{M\geq k\}}.
\end{align*}

Then, by taking the expectation~$\EE^u$ in expression~\eqref{eq43}, and using the Monotone Convergence Theorem once again, we obtain
\begin{align*}
r_h' \leq \EE^u\Big[P(V' \geq 2 \mid \ome_u )\Big] = \sum_{k=2}^{\infty}\Big(\frac{h}{u}\Big)^k \EE^u\Big[\binom{M}{k} \Big(1-\frac{h}{u}\Big)^{M-k} \ind_{\{M\geq k\}}\Big],
\end{align*}
from where we conclude that
\begin{align*}
r_h' \leq \EE^u\Big[\binom{M}{\lfloor M/2\rfloor}\Big] \frac{\Big(\frac{h}{u}\Big)^2}{1-\Big(\frac{h}{u}\Big)},
\end{align*}
which shows our claim.
\end{proof}

Now observe that
\begin{align*} 
\EE^u\Big[\binom{N^{+}_{\ome_u}}{1} \Big(1-\frac{h}{u}\Big)^{N^{+}_{\ome_u}-1} \ind_{\{N^{+}_{\ome_u}\geq 1\}}\ind_{\{\ome_u\in A\}}\Big] = \EE^u\Big[N^{+}_{\ome_u}\Big(1-\frac{h}{u}\Big)^{N^{+}_{\ome_u}-1} \ind_{\{N^{+}_{\ome_u}\geq 1\}}\Big], 
\end{align*}
and, again by the Monotone Convergence Theorem,
\begin{align*}
\lim_{h\downarrow 0} \EE^u\Big[N^{+}_{\ome_u}\Big(1-\frac{h}{u}\Big)^{N^{+}_{\ome_u}-1} \ind_{\{N^{+}_{\ome_u}\geq 1\}}\Big] = \EE^u[N^{+}_{\ome_u}\ind_{\{N^{+}_{\ome_u}\geq 1\}}] = \EE^u[N^{+}_{\ome_u}],
\end{align*}
where the last equality comes from the fact that~$N^{+}_{\ome_u}$ only takes non-negative integer values.

Finally, using~\eqref{eq42} combined with the two lemmas, we conclude that
\begin{align*}
\lim_{h\downarrow 0}\frac{\PP^u(A)-\PP^{u-h}(A)}{h} = \lim_{h\downarrow 0} \frac{\frac{h}{u}\EE^u\Big[N^{+}_{\ome_u}\Big(1-\frac{h}{u}\Big)^{N^{+}_{\ome_u}-1} \ind_{\{N^{+}_{\ome_u}\geq 1\}}\Big] +r_h+r_h'}{h} = \frac{1}{u}\EE^u[N^{+}_{\ome_u}],
\end{align*}
thus establishing expression~\eqref{RFRI_2}.

\end{subsection}

\begin{subsection}{Another calculation for the right derivative} \label{right_derivative2}

In this section, we discuss the calculation of the right derivative of~$\PP^u(A)$ in a slightly different way from that presented previously in Section~\ref{right_derivative}, and this will lead us to obtain expression~\eqref{RFRI_3}, thus completing the proof of Theorem~\ref{RFRI_Thm}.

Let us focus again on the calculation of~$\PP^{u+h}(A)$, now for some~$u> 0$ and~$0<h<u$.

Recall the random variables~$M$ and~$H$, which are Poisson distributed with parameters respectively equal to~$u\capa(G)$ and~$h\capa(G)$, and which are also independent.

Conditioning on~$M$, we can write
\begin{align}
\PP^u(A) = \sum_{\ell=0}^{\infty} P_\ell(A)\frac{e^{-u\capa(G)}(u\capa(G))^\ell}{\ell!},
\label{eq44}
\end{align} 
where~$P_\ell(A):=\PP^u(A\mid M=\ell)$ does not depend on~$u$, but only on~$\ell$.

Now conditioning on the variables~$M$ and~$H$, and denoting the pro\-ba\-bi\-li\-ty~$\PP(\ome_{u+h}\in A\mid M=m, H=k)$ by~$P_{m+k}(A)$, we obtain
{\allowdisplaybreaks
\begin{align*}
\PP^{u+h}(A) &=  \sum_{m=0}^{\infty}\sum_{k=0}^{\infty}P_{m+k}(A) \PP(M=m)\PP(H=k) \\  
&=  \sum_{\ell=0}^{\infty}\sum_{k=0}^{\ell}P_{\ell}(A) \PP(M=\ell-k)\PP(H=k)  \\
&=  \sum_{\ell=0}^{\infty}\Big\{P_{\ell}(A)\frac{e^{-u\capa(G)}(u\capa(G))^{\ell}}{\ell!}\Big[\sum_{k=0}^{\ell} \binom{\ell}{k} \Big(\frac{h}{u}\Big)^k e^{-h\capa(G)}\Big]\Big\}, 
\end{align*}}
observing that~$P_{\ell}(A)$ also does not depend on~$h$.

Then, by subtracting~$\PP^u(A)$, as represented in~\eqref{eq44}, from the last expression, we have
\begin{align*}
\PP^{u+h}(A)-\PP^u(A) &= \sum_{\ell=0}^{\infty}\Big\{P_{\ell}(A)\frac{e^{-u\capa(G)}(u\capa(G))^{\ell}}{\ell!}\Big[e^{-h\capa(G)} - 1\Big]\Big\}    \\
& \hspace{0.5cm} +~ \sum_{\ell=1}^{\infty}\Big\{P_{\ell}(A)\frac{e^{-u\capa(G)}(u\capa(G))^{\ell}}{\ell!}\Big[e^{-h\capa(G)} \frac{\ell h}{u}\Big]\Big\} \\
& \hspace{0.5cm} +~ \sum_{\ell=2}^{\infty}\Big\{P_{\ell}(A)\frac{e^{-u\capa(G)}(u\capa(G))^{\ell}}{\ell!}\Big[e^{-h\capa(G)}\sum_{k=2}^{\ell} \binom{\ell}{k} \Big(\frac{h}{u}\Big)^k \Big]\Big\}.
\end{align*}

So, in order to compute the right derivative of~$\PP^u(A)$, it suffices to divide this expression by~$h$ and then to take the limit when~$h$ tends to zero. 

But the third summation in the last expression is equal to~$o(h)$ when~$h\downarrow 0$. To see this, observe that, since~$e^{-h\capa(G)}\leq 1$ for all~$h\geq 0$, and the terms~$P_{\ell}(A)$ are such that~$0\leq P_{\ell}(A)\leq 1$ for all~$\ell\geq 0$, $\ell\in\mathbb{N}$, then the referred summation is less than or equal to
\begin{align*}
\frac{\Big(\frac{h}{u}\Big)^2}{1-\Big(\frac{h}{u}\Big)} \sum_{\ell=0}^{\infty}\Big\{\frac{e^{-u\capa(G)}(u\capa(G))^{\ell}}{\lfloor \ell/2\rfloor!(\ell-\lfloor \ell/2\rfloor)!} \Big\},
\end{align*}
and it is possible to show that the last series converges, using for example the Ratio Test.

Thus we can write
\begin{align*}
\lim_{h\downarrow 0}\frac{\PP^{u+h}(A)-\PP^u(A)}{h} = -\capa(G)\PP^u(A) + \frac{1}{u}\sum_{\ell=0}^{\infty}\ell P_{\ell}(A)\frac{e^{-u\capa(G)}(u\capa(G))^{\ell}}{\ell!},
\end{align*}
and the last summation just corresponds to the expectation of~$M\PP^u(A\mid M)$, with respect to the law~$\PP^u$, which, in turn, is equal to~$\EE^u[ M\ind_{\{\ome_u\in A\}} ]$. This leads to expression~\eqref{RFRI_3}, finally completing the proof of Theorem~\ref{RFRI_Thm}.

\end{subsection}

\end{section}

\vspace{1cm}

\textit{Acknowledgments.} Diego F. de Bernardini was partially supported by Coordena\c{c}\~ao de Aperfei\c{c}oamento de Pessoal de N\'ivel Superior (CAPES) and by grant \#2014/14323--9, S\~ao Paulo Research Foundation (FAPESP). Serguei Popov thanks Conselho Nacional de Desenvolvimento Cient\'ifico e Tecnol\'ogico (CNPq) (grant \#300886/2008--0) and S\~ao Paulo Research Foundation (FAPESP) (grant \#2009/52379--8) for partial support. 
The authors thank the referees for the valuable comments and suggestions.

\end{document}